\newtheorem{theorem}{Theorem}
\newtheorem{lemma}{Lemma}
\newtheorem{corollary}{Corollary}
\newtheorem{proposition}{Proposition}
\newtheorem{example}{Example}
\theoremstyle{remark}
\newtheorem{remark}{Remark}
\theoremstyle{definition}
\newtheorem{definition}{Definition}
\newcommand{\E}{\mathbb{E}} 
\newcommand{\Var}{\operatorname{Var}} 
\newcommand{\Cov}{\operatorname{Cov}} 
\newcommand{\corr}{\operatorname{corr}} 
\newcommand{\rr}{{\mathbb R}}
\begin{document}

\begin{center}
\large{\textbf{Intermittency of Superpositions of Ornstein-Uhlenbeck Type Processes}}\\
\bigskip
\normalsize{Danijel Grahovac$^{1}$, Nikolai N. Leonenko$^{2}$, Alla Sikorskii$^{3}$, Irena Te\v{s}njak$^{3}$}\footnote{E-mail addresses: dgrahova@mathos.hr (D. Grahovac);  LeonenkoN@cardiff.ac.uk (N.N. Leonenko); sikorska@stt.msu.edu (A. Sikorskii); tesnjaki@stt.msu.edu (I. {Te\v{s}njak})}\\
\bigskip
\small{\textit{$^{1}$Department of Mathematics, University of Osijek, Croatia; $^{2}$Cardiff School of Mathematics, Cardiff University, UK; $^{3}$Department of Statistics and Probability, Michigan State University, USA\\}}
\end{center}
\bigskip

\textbf{Abstract:}
The phenomenon of intermittency has been widely discussed in physics literature. This paper provides a model of intermittency based on L\'evy driven Ornstein-Uhlenbeck (OU) type processes. Discrete superpositions of these processes can be constructed to incorporate non-Gaussian marginal distributions and long or short range dependence. While the partial sums of finite superpositions of OU type processes obey the central limit theorem, we show that the partial sums of a large class of infinite long range dependent superpositions are intermittent. We discuss the property of intermittency and behavior of the cumulants for the superpositions of OU type processes.

\medskip

\textbf{Keywords:Ornstein-Uhlenbeck type  processes, intermittency, long range dependence, weak convergence}

\section{Introduction}

The phenomenon of intermittency has been widely discussed in physics literature (see for example \cite{bertini1995stochastic,fujisaka1984theory,molchanov1991ideas,woyczynski1998burgers,zel1987intermittency} and \cite[Chapter 8]{frisch1995turbulence}). The term is used to describe models exhibiting high degree of variability and enormous fluctuations which escape from the scope of the usual limit theory. Terms multifractality, separation of scales, dynamo effect are often used interchangeably with intermittency. For a formal definition of intermittency appearing in the theory of stochastic partial differential equations (SPDE) we follow \cite{carmona1994parabolic} and \cite[Chapter 7]{khoshnevisan2014analysis}. There, a nonnegative random field $\{\psi_t(x), t\geq 0, x \in \mathbb{R} \}$ stationary in parameter  $x$  is said to be intermittent if the function $k \mapsto \gamma(k)/k$ is strictly increasing on $[2,\infty)$ where $\gamma(k)$ is the $k$-th moment Lyapunov exponent of $\psi$ defined by
\begin{equation}\label{SPDE_ver}
    \gamma(k)=\lim_{t\to\infty} \frac{\log \E\left(\psi_t(x)\right)^k}{t},
\end{equation}
assuming the limit exists and is finite. This approach to intermittency is tailored for the analysis of SPDE and characterizes fields with progressive growth of moments.

 To compare intermittency to a slower growth of moments, consider  the sum
$
\phi_n=\sum _{i=1}^{n} \xi _i,
$
 where $\xi_i$ are positive  independent identically distributed (iid) random variables with finite moments. The $k$-th moment of $\phi _n$  grows as $n^k(\E \xi_1)^k$, therefore
\begin{equation*}
\gamma (k)=\lim_{n\to \infty }\frac {k\log n+k\log \E \xi _1 }{ n}
=0
\end{equation*}
for all $k\ge 1$. With the appropriate centering and norming, the classical central limit theorem holds.

In contrast, for a sequence of products of positive random variables $\psi_n= \prod_{i=1}^n \xi_i$
\begin{equation*}
    \gamma(k)=\lim_{n\to\infty} \frac{\log \E \psi_n^k}{n} = \log \E \xi_1^k.
\end{equation*}
If $\xi_i$ are not constant a.s., then from Jensen's inequality it follows that for $l>k$
\begin{equation*}
   \E \xi_1^k < \left(\E \xi^l \right)^{\frac{k}{l}},
\end{equation*}
showing that $\gamma(k)/k$ is strictly increasing.  The wild growth of moments of $\psi_n$ provides the main heuristic argument that intermittency implies unusual limiting behavior. A formal argument showing that under some assumptions intermittency implies large peaks in the space coordinate of the random field can be found in \cite{khoshnevisan2014analysis}, some ideas of which will be used later in this paper.

By far the most investigated model exhibiting intermittent behavior is the parabolic Anderson model (see \cite{gartner2010intermittency,gartner2006intermittency,gartner2007geometric,gartner1990parabolic}). In this paper we consider models provided by the partial sums of discrete superpositions of L\'evy driven Ornstein-Uhlenbeck (OU) type processes. While models based on L\'evy flights describe the position of particle, models given by OU dynamics describe the velocity of particle trapped in a field generated by quadratic potential (\cite{eliazar2005levy}). Applications of L\'evy-driven OU type processes include financial econometrics \cite{BNS_01,LPS_VG,Li}, fluid dynamics \cite{solomon1993observation}, plasma physics \cite{chechkin2002fractional} and biology \cite{Ric}. The stochastic model discussed in this paper provides another example of intermittency model based on the velocity (see \cite[Section 8.5]{frisch1995turbulence}). First, we modify the preceding definition of intermittency to tailor it to the analysis of sequences of partial sum processes. In the case of finite superpositions we show that the central limit theorem holds. In the case of infinite long range dependent superpositions, we show that the growth of cumulants is such that the partial sum process is intermittent. The appendix contains examples that fit our assumptions which cover, to our knowledge, all the examples with tractable distributions of superpositions.

\section{Intermittency}

For a process $\{Y(t),\, t \geq 0\}$, denote
\begin{equation*}
    \overline{q} = \sup \{ q >0 :\E|Y(t)|^q < \infty  \ \forall t\}.
\end{equation*}
Our definition of intermittency is based on the version of Lyapunov exponent that replaces $t$ in the denominator of \eqref{SPDE_ver} with $\log t$.  For a stochastic process $\{Y(t),\, t\ge 0\}$,  define the {\emph {scaling function}} at point $q \in [0,\overline{q})$ as
\begin{equation}\label{deftau}
    \tau(q) = \lim_{t\to \infty} \frac{\log \E |Y(t)|^q}{\log t},
\end{equation}
assuming the limit exists and is finite for every $q \in [0,\overline{q})$. Objects similar to the scaling function \eqref{deftau} appear in the theory of multifractal processes (see e.g. \cite{GL2015}), however, there are some important differences \cite{KX}. The following proposition gives some properties of $\tau$.

\begin{proposition}\label{propertiesoftau} The scaling function $\tau $ defined by \eqref{deftau} has the following properties:
\begin{enumerate}[(i)]
  \item $\tau$ is non-decreasing and so is $q \mapsto \tau(q)/q$;
  \item $\tau$ is convex;
  \item if for some $0<p<r<\overline{q}$, $\tau(p)/p < \tau(r)/r$, then there is a $q \in (p,r)$ such that $\tau(p)/p < \tau(q)/q < \tau(r)/r$.
\end{enumerate}
\end{proposition}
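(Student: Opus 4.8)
The plan is to deduce all three statements from two elementary convexity facts about the fixed-$t$ functions $\varphi_t(q):=\log\E|Y(t)|^q$ on $[0,\overline q)$, together with the observation that $\tau$ is the pointwise limit as $t\to\infty$ of $q\mapsto\varphi_t(q)/\log t$ (legitimate once $\log t>0$). First I would record that $\varphi_t$ is convex: for $q=\lambda p+(1-\lambda)r$ with $\lambda\in[0,1]$, Hölder's inequality with conjugate exponents $1/\lambda$ and $1/(1-\lambda)$ gives $\E|Y(t)|^{q}\le\bigl(\E|Y(t)|^{p}\bigr)^{\lambda}\bigl(\E|Y(t)|^{r}\bigr)^{1-\lambda}$, and taking logarithms yields $\varphi_t(q)\le\lambda\varphi_t(p)+(1-\lambda)\varphi_t(r)$. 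Second, since $\varphi_t(0)=0$, the chord-slope characterization of convexity (equivalently Lyapunov's inequality $\|Y(t)\|_p\le\|Y(t)\|_r$ for $p\le r$) shows that $q\mapsto\varphi_t(q)/q$ is non-decreasing on $(0,\overline q)$.

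Dividing both facts by $\log t>0$ and letting $t\to\infty$ then gives (i) and (ii) directly: a pointwise limit of convex functions is convex, so $\tau$ is convex, which is (ii); a pointwise limit of non-decreasing functions is non-decreasing, applied to $q\mapsto\varphi_t(q)/(q\log t)$, shows $q\mapsto\tau(q)/q$ is non-decreasing. For the remaining half of (i) I would note $\tau(0)=0$ (as $\E|Y(t)|^0=1$) and argue monotonicity of $\tau$ itself along the same lines — for instance from the elementary bound $|Y(t)|^{q_1}\le 1+|Y(t)|^{q_2}$, which after taking expectations, logarithms, dividing by $\log t$, and passing to the limit yields $\tau(q_1)\le\tau(q_2)$ for $q_1<q_2$.

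For (iii) the point is that monotonicity of $q\mapsto\tau(q)/q$ by itself is not enough — a non-decreasing function could jump across the whole interval $(\tau(p)/p,\tau(r)/r)$ — so one must use the continuity that convexity provides. Since $\tau$ is convex on $[0,\overline q)$ it is continuous on the open interval $(0,\overline q)$, hence so is $g(q):=\tau(q)/q$; as $[p,r]\subset(0,\overline q)$, $g$ is continuous on $[p,r]$, and by hypothesis $g(p)<g(r)$. Choosing any $v\in(g(p),g(r))$ (non-empty since $g(p)<g(r)$), the intermediate value theorem produces $q\in[p,r]$ with $g(q)=v$; this $q$ is neither $p$ nor $r$ because $g(p)\ne v\ne g(r)$, so $q\in(p,r)$ and $\tau(p)/p=g(p)<g(q)=v<g(r)=\tau(r)/r$, which is exactly (iii).

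I do not expect a serious obstacle: Hölder and Lyapunov handle the fixed-$t$ inequalities, stability of convexity and monotonicity under pointwise limits handles the passage $t\to\infty$, and (iii) is an intermediate value argument. The one place to stay attentive is precisely in (iii): one has to remember that the interior point must be produced for the \emph{ratio} $\tau(q)/q$ rather than for $\tau$, and that it is the continuity coming from (ii), not the monotonicity from (i), that makes the argument go through.
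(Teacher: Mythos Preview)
Your proposal is correct and follows essentially the same route as the paper: Jensen/Lyapunov for the monotonicity of $q\mapsto\tau(q)/q$, H\"older for convexity, and for (iii) the observation that convexity forces continuity of $\tau$ (hence of $\tau(q)/q$) on the open interval, after which the intermediate value theorem gives the desired interior point. The only cosmetic difference is that you package the fixed-$t$ inequalities into properties of $\varphi_t$ and then pass to the limit, whereas the paper applies Jensen and H\"older directly and divides by $\log t$ in one step; your separate bound $|Y(t)|^{q_1}\le 1+|Y(t)|^{q_2}$ for the monotonicity of $\tau$ itself is a harmless variant of the paper's single Jensen inequality $\tau(q_1)\le (q_1/q_2)\tau(q_2)$.
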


\begin{proof}
(i) For $0\leq q_1 < q_2 < \overline{q}$ Jensen's inequality implies
\begin{equation*}
   \E |Y(t)|^{q_1} =\E \left( |Y(t)|^{q_2} \right)^{\frac{q_1}{q_2}} \leq  \left(\E |Y(t)|^{q_2} \right)^{\frac{q_1}{q_2}}
\end{equation*}
and thus
\begin{equation*}
    \tau(q_1) \leq \frac{q_1}{q_2} \tau(q_2)
\end{equation*}
proving part (i).

(ii) Take $0\leq q_1 < q_2 < \overline{q}$ and $w_1, w_2 \geq 0$ such that $w_1+w_2=1$. It follows from H\"older's inequality that
\begin{equation*}
   \E |Y(t)|^{w_1 q_1 + w_2 q_2} \leq \left(\E |Y(t)|^{q_1} \right)^{w_1} \left( \E |Y(t)|^{q_2} \right)^{w_2}.
\end{equation*}
Taking logarithms, dividing by $\log t$  for $t>1$ and letting $t\to \infty$ we have
\begin{equation*}
    \tau(w_1 q_1 + w_2 q_2) \leq w_1 \tau(q_1) + w_2 \tau(q_2).
\end{equation*}

(iii) This is clear since $q \mapsto \tau(q)/q$ is continuous by (ii).
\end{proof}

We now define intermittency for a stochastic process and for a sequence of random variables by using the corresponding partial sum process.

\begin{definition}
A stochastic process $\{Y(t),\, t \geq 0\}$ is \emph{intermittent} if there exist $p, r \in (0,\overline{q})$ such that
\begin{equation*}
    \frac{\tau(p)}{p} < \frac{\tau(r)}{r}.
\end{equation*}
\end{definition}

Later in the paper, we will investigate intermittency of a stationary sequence of random variables $\{Y_i$, $i\in \mathbb{N\}}$ with finite mean. In this sense, intermittency will be considered as intermittency of the centered partial sum process
\begin{equation*}
    S(t) = \sum_{i=1}^{\lfloor t \rfloor} Y_i - \sum_{i=1}^{\lfloor t \rfloor} \E Y_i , \ t \geq 0.
\end{equation*}

Proposition \ref{propertiesoftau}(i) shows that the function $q \mapsto \tau(q)/q$ is always non-decreasing. What makes the process intermittent is the existence of points of strict increase. In section \ref{Section4}, we connect this property to the limiting behavior of cumulants of partial sums of superpositions of Ornstein-Uhlenbeck type processes. We show that while the partial sums of finite superpositions obey the central limit theorem, partial sums of  infinite long-range dependent superpositions provide examples of intermittent processes.

\section{Ornstein-Uhlenbeck type processes}

Ornstein-Uhlenbeck (OU) type process is the solution of the  stochastic
differential equation
\begin{equation}\label{OU}
dX(t)=-\lambda X(t) dt +dZ(\lambda t),\quad t\ge 0,
\end{equation}
where $\lambda >0$, and  $Z(t), t\ge 0$ is a L\'evy process.  The
process $Z $ is termed the background driving L\'{e}vy
process (BDLP) corresponding to the process $Y$.
The strong stationary solution of this equation exists if and only if
\begin{equation*}
\E \log \left( 1+\left\vert Z\left( 1\right) \right\vert
\right) <\infty .
\end{equation*}%
See  \cite{Sato_99} for a detailed discussion of OU type processes driven by L\'evy noise and their properties. The solution of \eqref{OU} is given by
\begin{equation}\label{OUsolution}
X(t)=e^{-\lambda t}X(0)+\int _0^t e^{-\lambda (t-s)}dZ(\lambda s),
\end{equation}
where the initial condition $X(0)$ is independent of the process $Z$.
Equation \eqref{OUsolution} specifies the unique (up to indistinguishability) strong solution of equation \eqref{OU} \cite{Sato_99}. The meaning of the stochastic integral in \eqref{OUsolution}
was detailed in \cite[p.214]{Applebaum_2004}.

The scaling in equation \eqref{OU} is such that the marginal distribution of the solution does not depend on $\lambda $, and the law of L\'evy process is determined uniquely by the distribution of $Y$ through the relation of the cumulant transforms. Let
\begin{equation*}
\kappa (z)=C\left\{ z;X\right\} =\log \E \exp \left\{ izX\right\}
,\quad z\in \mathbb{R}
\end{equation*}%
be the cumulant transform of a random variable $X$, and
\begin{equation*}
\kappa _m=(-i)^m\frac{d^m}{dz^m}\kappa (z)\vert _{z=0},\,\,m\ge 1
\end{equation*}
be the cumulant of order $m$ of $X$.

 The cumulant transforms of $X(t)$ and $Z(1) $ are related by
\begin{equation*}
C\left\{ z;X\right\} =\int_{0}^{\infty }C\left\{ e^{-s}z;Z(1)
\right\} ds=\int_{0}^{{}z}C\left\{ \xi ;Z(1) \right\} \frac{d\xi
}{\xi }
\end{equation*}
and
\begin{equation*}
C\left\{ z;Z(1) \right\} =z\frac{\partial C\left\{ z;X\right\} }{
\partial z}.
\end{equation*}
By specifying the appropriate BLDP, OU type processes with given self-decomposable marginal distributions can be obtained. These distributions include normal, Gamma, inverse Gaussian, Student's t, and many others. If the second moment is finite, the correlation function is exponential:
\begin{equation*}
\corr (X(t), X(s))=e^{-\lambda (t-s)},\,\, t\ge s\ge 0.
\end{equation*}

\section{Discrete superpositions of Ornstein-Uhlenbeck type processes}

Superpositions of OU type processes, or supOU processes for short, were introduced in \cite{BN98,BN_01,BN_05}, see also  \cite{BNL_05a,BNS_01,fasen2007extremes}, among others.
We define the superpositions under the following condition:

{\bf {(A)}} Let $X^{(k)}(t),\,k\geq 1$ be the sequence of independent stationary processes such
that each $X^{(k)}(t)$ is the stationary solution of the equation
\begin{equation}
dX^{(k)}(t)=-\lambda _kX^{(k)}(t) dt +dZ^{(k)}(\lambda _kt),\quad t\geq
0,  \label{sup1}
\end{equation}%
in which the L\'{e}vy processes $Z^{(k)}$ are independent,  and $\lambda _k>0 $  for all $k\ge 1$.
Assume that the self decomposable distribution of $X^{(k)}$ has finite moments of order $p \ge 2$ and that cumulants of orders $2, \dots , p$ of $X^{(k)}$  are proportional to some parameter $\delta _k $ of the distribution of $X^{(k)}$.

Define the superposition of OU processes, either finite for
an integer $K\ge 1$
\begin{equation}\label{supfinite}
X_{K}(t)=\sum_{k=1}^{K}X^{(k)}(t),\,\,t\in \mathbb{R}
\end{equation}%
or infinite
\begin{equation}\label{supinfinite}
X_{\infty }(t)=\sum_{k=1}^{\infty }X^{(k)}(t),\,\,t\in \mathbb{R}.
\end{equation}
The construction with infinite superposition is well-defined in the sense of
mean-square or almost-sure convergence provided that the following condition holds:
\begin{equation*}
{\text {\bf (B)}}\quad \sum_{k=1}^{\infty }\E X^{(k)}(t)<\infty \text{ and } \sum_{k=1}^{\infty.
}VarX^{(k)}(t)<\infty .
\end{equation*}
Although assumption (A) may seem restrictive, it is actually easy to show that it is satisfied for many examples with tractable distributions of superpositions. The appendix  provides a number of examples where both assumptions (A) and (B) are satisfied. These examples include Gamma, inverse Gaussian and other well known distributions. Their superpositions have the marginal distributions that belong to the same class as the marginal distributions of the components of superposition.

In the case of finite superposition, the covariance function of the resulting process is
\begin{equation*}
R_{X_{K}}(t)=\Cov(X_{K}(0),X_{K}(t))=\sum_{k=1}^{K}\Var(X^{(k)}(t))e^{-\lambda _kt},
\end{equation*}%
and the finite superposition is a short-range dependent process since the correlation function is integrable.

In the case of infinite superposition, the covariance function is
\begin{equation*}
R_{X_{\infty }}(t)=\Cov(X_{\infty }(0),X_{\infty }(t))=\sum_{k=1}^{\infty
}\Var(X^{(k)}(t))e^{-\lambda _kt},
\end{equation*}%
and under the condition (A) the variance of $X^{(k)}(t)$
is proportional to $\delta_k,$ that is
\begin{equation*}
\Var(X^{(k)}(t))=\delta _{k}C_{2},
\end{equation*}%
where constant $C_{2}$ does not depend on $k$ and reflects parameters of the
marginal distribution of $X^{(k)}$. If one chooses
\begin{equation*}
\delta _{k}=k^{-(1+2(1-H))},\,\,\frac{1}{2}<H<1, \quad
\lambda _k=\lambda /k
\end{equation*}
 for some $\lambda >0$, then
\begin{equation}\label{infcov}
R_{X_{\infty }}(t)=C_{2}\sum_{k=1}^{\infty }\frac{1}{k^{1+2(1-H)}}e^{-\lambda t/k}.
\end{equation}
Lemma below shows that the correlation function \eqref{infcov} is not integrable for the chosen parameters $\delta _k$ and $\lambda _k$, thus the process obtained via infinite superposition exhibits long-range dependence.
\begin{lemma}\label{Lemma1}
For the infinite superposition \eqref{supinfinite} of OU type processes that satisfy condition (A) with $p=2$  and condition (B), the covariance function of $X_{\infty }(t)$  given by \eqref{infcov} with  $\lambda ^{(k)}=\lambda /k$ and
$\delta _{k}=k^{-(1+2(1-H))}$, $\frac{1}{2}<H<1$, can
be written as
\begin{equation*}
R_{X_\infty }(t)=\frac{L(t)}{t^{2(1-H)}}, \quad t>0
\end{equation*}%
where $L$ is a slowly varying at infinity function.
\end{lemma}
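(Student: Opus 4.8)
The plan is to recognize the series
\begin{equation*}
R_{X_\infty}(t) = C_2 \sum_{k=1}^\infty k^{-(1+2(1-H))} e^{-\lambda t/k}
\end{equation*}
as a Dirichlet-type series whose large-$t$ asymptotics are governed by the terms with $k$ of order $t$, and to extract a regularly varying leading order of exponent $-2(1-H)$. Writing $\beta = 2(1-H) \in (0,1)$, I want to show $R_{X_\infty}(t) \sim C\, t^{-\beta}$ as $t\to\infty$ for an explicit constant $C$, which immediately gives the claimed form with $L(t) = t^{\beta} R_{X_\infty}(t) \to C$, a (trivially) slowly varying function.

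First I would convert the sum to an integral. Since $x \mapsto x^{-(1+\beta)} e^{-\lambda t/x}$ is, for fixed $t$, unimodal on $(0,\infty)$ (increasing then decreasing, with a single interior maximum near $x \asymp t$), standard sum-integral comparison bounds give
\begin{equation*}
R_{X_\infty}(t) = C_2 \int_1^\infty x^{-(1+\beta)} e^{-\lambda t/x}\, dx + O\!\left(\sup_{x\ge 1} x^{-(1+\beta)} e^{-\lambda t/x}\right),
\end{equation*}
where the error term is the maximal summand, which is $O(t^{-(1+\beta)})$ — of lower order than the main term. Then I substitute $u = \lambda t / x$, i.e. $x = \lambda t / u$, $dx = -\lambda t\, u^{-2}\, du$, turning the integral into
\begin{equation*}
\int_0^{\lambda t} \left(\frac{\lambda t}{u}\right)^{-(1+\beta)} e^{-u}\, \frac{\lambda t}{u^2}\, du = (\lambda t)^{-\beta} \int_0^{\lambda t} u^{\beta - 1} e^{-u}\, du.
\end{equation*}
As $t\to\infty$ the remaining integral converges to $\Gamma(\beta)$ (finite since $\beta>0$), so the integral is asymptotically $(\lambda t)^{-\beta}\Gamma(\beta)$. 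Combining, $R_{X_\infty}(t) = C_2 \Gamma(\beta) \lambda^{-\beta} t^{-\beta}(1+o(1))$, so $L(t) := t^{\beta} R_{X_\infty}(t)$ converges to the positive constant $C_2 \Gamma(2(1-H)) \lambda^{-2(1-H)}$, hence is slowly varying at infinity.

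The main obstacle is making the sum-to-integral passage rigorous with a genuinely negligible remainder: one must handle the small-$k$ terms (where the integrand is increasing and the naive comparison goes the "wrong" way) separately from the large-$k$ tail, and confirm the discrepancy is $O(t^{-(1+\beta)})$ uniformly, which is dominated by $t^{-\beta}$. An alternative that sidesteps delicate error bookkeeping is to invoke a Tauberian/Abelian theorem for Dirichlet series directly, or to apply dominated convergence to $t^{\beta} R_{X_\infty}(t)$ after writing it as $C_2 \sum_k (t/k)^{\beta} e^{-\lambda t/k} \cdot k^{-1}$ and comparing with a Riemann sum for $\int_0^\infty y^{\beta - 1} e^{-\lambda y}\, dy$; either route yields the same constant. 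Once the asymptotic equivalence $R_{X_\infty}(t) \sim C t^{-\beta}$ is established, the representation $R_{X_\infty}(t) = L(t)/t^{2(1-H)}$ with $L$ slowly varying is immediate, and non-integrability of the correlation function follows since $2(1-H) < 1$.
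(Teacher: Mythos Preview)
Your proposal is correct and follows essentially the same route as the paper's proof: both compare the series $\sum_{k\ge 1} k^{-(1+\beta)} e^{-\lambda t/k}$ with the integral $\int_1^\infty u^{-(1+\beta)} e^{-\lambda t/u}\,du$, perform the substitution $s=\lambda t/u$ to obtain $(\lambda t)^{-\beta}\int_0^{\lambda t} s^{\beta-1}e^{-s}\,ds\to(\lambda t)^{-\beta}\Gamma(\beta)$, and conclude that $L(t)=t^{\beta}R_{X_\infty}(t)$ tends to a positive constant, hence is slowly varying. The only cosmetic difference is that the paper writes the sum--integral discrepancy as the single term $e^{-\lambda t}$ (together with a squeeze from below by the integral), whereas you bound it by the maximal summand $O(t^{-(1+\beta)})$; both remainders are $o(t^{-\beta})$ and the arguments are otherwise identical.
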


\begin{proof}
The proof of this lemma is essentially the same as the proofs presented for particular cases of superpositions of OU processes in \cite{LTauf,LPS_d}. We provide it here for completeness and for the remark that follows. The remark will be used for proofs later in the paper.
Let
\begin{equation*}
L(t)=C_{2}\left (\lambda t\right)^{2(1-H)}\sum_{k=1}^{\infty }\frac{1}{k^{1+2(1-H)}}e^{-\lambda t/k}.
\end{equation*}%
Estimate the sum appearing in the expression for $L$ as follows:
\begin{equation*}
\int_{1}^{\infty }\frac{e^{-\lambda t/u}}{u^{1+2(1-H)}}du\leq \sum_{k=1}^{\infty }%
\frac{1}{k^{1+2(1-H)}}e^{-\lambda t/k}\leq \int_{1}^{\infty }\frac{e^{-\lambda t/u}}{%
u^{1+2(1-H)}}du+e^{-\lambda t}.
\end{equation*}%
Transform the variables $\lambda t/u=s$ to get
\begin{equation*}
C_2\int_{0}^{\lambda t}e^{-s}s^{2(1-H)-1}ds\leq L(t)\leq
C_2\int_{0}^{\lambda t}e^{-s}s^{2(1-H)-1}ds+C_2e^{-\lambda t}\left (\lambda t\right)^{2(1-H)}.
\end{equation*}%
Since
\begin{equation*}
\int_{0}^{\lambda t}e^{-s}s^{2(1-H)-1}ds\rightarrow \Gamma (2(1-H))
\end{equation*}%
as $t\rightarrow \infty $, it follows that $\lim_{t\rightarrow \infty
}L(tv)/L(t)=1$ for any fixed $v>0$.

\end{proof}

\begin{remark}
\label{bounded}
From proof of Lemma \ref{Lemma1}
\begin{equation*}
\begin{split}
L([Nt])&\le C_2 \int _0^{\lambda [Nt]} e^{-s} s^{2(1-H)-1}ds+C_2 e^{-\lambda [Nt]} (\lambda [Nt])^{2(1-H)} \\
&\le C_2 \Gamma (2(1-H))+C_2 e^{-2(1-H)}(2(1-H)])^{2(1-H)}
\end{split}
\end{equation*}
for all $N\ge 1$ and $t\in [0, 1]$ since the function $x^{2(1-H)}e^{-x}$ is bounded (attains its maximum at $x=2(1-H)$).
Also from the proof of Lemma \ref{Lemma1}
\begin{equation*}
L(N)\ge C_2 \int _0^{\lambda N} e^{-s} s^{2(1-H)-1}ds\ge  C_2 \int _0^{\lambda } e^{-s} s^{2(1-H)-1}ds
\end{equation*}
for all $N\ge 1$. Also note that $L(0)=0$.  Therefore the ratio $L([Nt])/L(N)$ is bounded uniformly in $ N\ge 1$ and $t\ge 0$.
\end{remark}

\section{Limit distributions of partial sums of superpositions of supOU processes}\label{Section4}

For $t>0$, consider partial sum processes
\begin{equation}\label{finitepartial}
S_K(t)=\sum _{i=1}^{[t]}X _K(i)
\end{equation}
and
\begin{equation}\label{infpartial}
S_{\infty } (t)=\sum _{i=1}^{[t]}X _{\infty }(i).
\end{equation}

We begin with the limit distribution of the partial sum process for the finite superposition. The asymptotic normality in this case is easy to prove using the strong mixing property of OU processes established in  \cite {JVDM_05,M_04}. Previously asymptotic normality of partial sums was reported for inverse Gaussian and gamma finite superpositions \cite{LPS_VG}. The result below is a straightforward generalization to a more general class of processes.

\begin{theorem}
For a fixed integer $K\ge 1$,  let $X_K$ be defined by \eqref{supfinite}, where stationary OU type processes $\{X^{(k)},\,k=1, \dots , K\}$ defined by \eqref{sup1}  are independent and $\E |X^{(k)}|^{2+d}<\infty $ for  some $d >0$ and all $k=1,\dots , K$. Then the partial sums process \eqref{finitepartial}, centered and appropriately normed, converges to the Brownian motion
$$\frac {1}{c_KN^{1/2}}\biggl(S_K([Nt])-\E S_m([Nt])\biggr)\to B(t),\quad t\in[0,1],$$
as $N\to \infty $ in the sense of weak convergence in Skorokhod space $D[0,1]$. The norming constant $c_K$  is given by
\begin{equation*}
c_K=\biggl(\sum_{k=1}^K \Var \left (X^{(k)} \right)\frac {1-e^{-\lambda ^{(k)}}} {1+e^{-\lambda ^{(k)}}}\biggr)^{1/2}.
\end{equation*}
\end{theorem}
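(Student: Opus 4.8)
The plan is to recast the statement as an invariance principle for a single stationary strongly mixing sequence and then invoke a functional central limit theorem. Throughout, replace $X^{(k)}$ by its centered version $\widetilde X^{(k)}(t)=X^{(k)}(t)-\E X^{(k)}(t)$ and set $\widetilde X_K=\sum_{k=1}^{K}\widetilde X^{(k)}$, so that $S_K([Nt])-\E S_K([Nt])=\sum_{i=1}^{[Nt]}\widetilde X_K(i)$.

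\emph{Step 1 (mixing).} Each stationary OU type process $X^{(k)}$ is strongly mixing with exponentially decaying mixing coefficients, $\alpha_{X^{(k)}}(n)\le C_ke^{-c_kn}$ for some $C_k,c_k>0$, by \cite{JVDM_05,M_04}. A finite sum of independent strongly mixing stationary processes is strongly mixing with $\alpha_{X_K}(n)\le\sum_{k=1}^{K}\alpha_{X^{(k)}}(n)$, which again decays exponentially. Hence $\widetilde X_K$ is a stationary, centered, strongly mixing sequence with $\alpha_{\widetilde X_K}(n)=O(e^{-cn})$ for some $c>0$. By the triangle inequality in $L^{2+d}$ and finiteness of $K$ one has $\E|\widetilde X_K|^{2+d}<\infty$; in particular $\sum_{n\ge1}\alpha_{\widetilde X_K}(n)^{d/(2+d)}<\infty$.

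\emph{Step 2 (normalization) and Step 3 (finite-dimensional convergence).} At integer lags $R_{X_K}(h)=\sum_{k=1}^{K}\Var(X^{(k)})e^{-\lambda^{(k)}|h|}$, which is absolutely summable, so
\begin{equation*}
\frac{1}{n}\Var\bigl(S_K(n)\bigr)=\frac{1}{n}\sum_{i,j=1}^{n}R_{X_K}(i-j)\ \longrightarrow\ \sum_{k=1}^{K}\Var\bigl(X^{(k)}\bigr)\Bigl(1+2\sum_{h=1}^{\infty}e^{-\lambda^{(k)}h}\Bigr),
\end{equation*}
a finite and, since each non-degenerate $X^{(k)}$ has $\Var(X^{(k)})>0$, strictly positive constant; summing the geometric series identifies it with $c_K^{2}$, which is exactly the normalization making the limit have variance $t$ at time $t$. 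For $0=t_0<t_1<\dots<t_m\le 1$, the increments $S_K([Nt_j])-S_K([Nt_{j-1}])-\E(\cdot)$ are partial sums of $\widetilde X_K$ over index blocks separated by gaps of order $N$; by the strong mixing the joint law of the rescaled increments is asymptotically the product of the marginal limits (the joint characteristic function factorizes up to a telescoping error controlled by $\alpha_{\widetilde X_K}$ over the gaps), and each increment, being a sum of $\widetilde X_K$ over a window of length $\sim N(t_j-t_{j-1})$, obeys the central limit theorem for stationary strongly mixing sequences (Ibragimov's theorem, whose moment and rate hypotheses were verified in Step~1) with limiting variance $c_K^{2}N(t_j-t_{j-1})$. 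Hence $c_K^{-1}N^{-1/2}\bigl(S_K([N\cdot])-\E S_K([N\cdot])\bigr)$ converges in finite-dimensional distributions to the increments of $B$.

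\emph{Step 4 (tightness) and conclusion.} For tightness in $D[0,1]$ I would use a Rosenthal-type moment inequality for partial sums of strongly mixing sequences: under $\E|\widetilde X_K|^{2+d}<\infty$ with exponentially decaying $\alpha$, $\E\bigl|\sum_{i=m+1}^{n}\widetilde X_K(i)\bigr|^{2+d}\le C(n-m)^{1+d/2}$; since the exponent $1+d/2$ exceeds $1$, the standard moment criterion for tightness in $D[0,1]$ (Billingsley) applies and yields tightness of the normalized partial sum processes. Combined with Step~3, this gives weak convergence in $D[0,1]$ to Brownian motion $B$. \emph{Main obstacle.} The one genuinely technical point is the tightness moment bound: securing the exponent $1+d/2>1$ is precisely what forces the hypothesis of a moment strictly above $2$, while everything else (exponential mixing of the superposition, the geometric-series variance computation, the one-dimensional CLT) is routine once the problem is cast as an invariance principle for a mixing sequence. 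Alternatively one may quote an off-the-shelf functional CLT for stationary strongly mixing sequences (of Herrndorf or Peligrad--Utev type), whose hypotheses are exactly the three facts assembled above: exponential mixing rate, finite $(2+d)$-th moment, and $\lim_n n^{-1}\Var S_K(n)=c_K^{2}\in(0,\infty)$.
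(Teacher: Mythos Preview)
Your approach is essentially the paper's: establish exponential $\alpha$-mixing for each $X^{(k)}$ (the paper routes through $\beta$-mixing via \cite{M_04} and the inequality $2\alpha\le\beta$ from \cite{Brad}), observe that a finite sum of independent exponentially mixing processes is again exponentially $\alpha$-mixing, and then invoke a functional CLT for stationary strongly mixing sequences---the paper simply cites \cite[Theorem~4.2]{D_68}, which packages your Steps~2--4 into a single reference. One caveat worth flagging: the geometric series in your Step~2 sums to $\Var(X^{(k)})\,\dfrac{1+e^{-\lambda^{(k)}}}{1-e^{-\lambda^{(k)}}}$, which is the correct long-run variance but is the \emph{reciprocal} of the ratio printed in the theorem's $c_K$, so the displayed constant in the statement appears to carry a typo and your assertion that ``summing the geometric series identifies it with $c_K^{2}$'' should be qualified accordingly.
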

\begin{proof}
Since  each OU process in the superposition has a finite second moment,  $\beta $-mixing (absolute regularity) for each OU process holds with the exponential rate. Namely, there exists $a_k>0$ such that the mixing coefficient $\beta_{X^{(k)}} (t)=O(e^{-a_kt})$ \cite[Theorem 4.3]{M_04}.   Denote by $\alpha ^{(k)}(t)$ the strong mixing coefficient of the process $X^{(k)}$, then from  \cite{Brad},  $2\alpha ^{(k)}(t) \le \beta ^{(k)}(t)\le D_k e^{-a_kt}$  for a constant $D_k$, for each $k=1,\dots ,m $.
A finite sum of $\alpha $-mixing processes with exponentially decaying mixing coefficients is also $\alpha $-mixing with exponentially decaying mixing coefficient, therefore weak convergence  of partial sums of the process $X_K$ in  $D[0,1]$ follows from \cite[Theorem 4.2] {D_68}.
\end{proof}
 We now proceed with the limit distribution of the partial sum process for the infinite superposition \eqref{supinfinite}. The variance of this process has been computed in \cite[Equation (5.3)]{LTauf}, however the result on the asymptotic normality of the partial sum process \cite [Theorem 3]{LTauf} was not correct. Also incorrect was statement (30) of \cite [Theorem 5]{BNL_05}. Here we provide the derivation of the variance and correct the result on the limit distribution.

\begin{lemma}\label{Lemma42}
For the infinite superposition \eqref{supinfinite} of OU type processes that satisfy condition (A) with $p=2$ and condition (B), set  $\lambda ^{(k)}=\lambda /k$ and
$\delta _{k}=k^{-(1+2(1-H))}$, $\frac{1}{2}<H<1$. Then
\begin{equation}\label{infvariance}
\Var\left( S_{\infty }([Nt])\right)=\frac {L(N) [Nt]^{2H}}{H(2H-1)}\left (1+o(1)\right) \text { as } N\to \infty,
\end{equation}
where $L$ is a slowly varying at infinity function.
\end{lemma}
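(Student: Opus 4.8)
The plan is to reduce the variance of the partial sum to a single sum of the covariance function obtained in Lemma~\ref{Lemma1}, and then read off the order of growth by a Karamata-type estimate. Since $X_\infty$ is stationary, for $m=[Nt]$ one has
\begin{equation*}
\Var\bigl(S_\infty(m)\bigr)=\sum_{i,j=1}^{m}R_{X_\infty}(|i-j|)=m\,R_{X_\infty}(0)+2\sum_{h=1}^{m-1}(m-h)\,R_{X_\infty}(h).
\end{equation*}
Interchanging the absolutely convergent sums over $i,j$ and over the independent components $X^{(k)}$ (each having exponential correlation), this is exactly the explicit series of \cite[Eq.~(5.3)]{LTauf}, but the covariance form above is the more convenient one. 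By Lemma~\ref{Lemma1}, $R_{X_\infty}(h)=L(h)\,h^{-2(1-H)}$ with $L$ slowly varying, and the proof of that lemma in fact shows $L(h)$ converges to a positive constant $c_H$ as $h\to\infty$; thus $R_{X_\infty}$ is regularly varying at infinity with index $-2(1-H)\in(-1,0)$ and, in particular, is not summable.

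The core step is the asymptotic evaluation of $\sum_{h=1}^{m-1}(m-h)R_{X_\infty}(h)$. Writing $\alpha=2(1-H)$ and $R_{X_\infty}(h)=(c_H+o(1))h^{-\alpha}$, split the sum at a large fixed $H_0$: the finitely many terms with $h<H_0$ contribute $O(m)=o(m^{2H})$, while on the tail the correction is uniformly small, so the sum is asymptotic to $c_H\sum_{h=1}^{m-1}(m-h)h^{-\alpha}$. The remaining elementary sum is handled via $\sum_{h=1}^{m-1}h^{-\alpha}\sim m^{1-\alpha}/(1-\alpha)$ and $\sum_{h=1}^{m-1}h^{1-\alpha}\sim m^{2-\alpha}/(2-\alpha)$:
\begin{equation*}
\sum_{h=1}^{m-1}(m-h)h^{-\alpha}=m\sum_{h=1}^{m-1}h^{-\alpha}-\sum_{h=1}^{m-1}h^{1-\alpha}\sim \frac{m^{2-\alpha}}{(1-\alpha)(2-\alpha)}=\frac{m^{2H}}{2H(2H-1)},
\end{equation*}
using $2-\alpha=2H$ and $1-\alpha=2H-1$. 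Since also $m\,R_{X_\infty}(0)=O(m)=o(m^{2H})$, combining these gives
\begin{equation*}
\Var\bigl(S_\infty(m)\bigr)\sim \frac{2c_H}{2H(2H-1)}\,m^{2H}=\frac{c_H}{H(2H-1)}\,m^{2H}.
\end{equation*}
Finally, set $m=[Nt]$; since $L$ is slowly varying, $L(N)\to c_H$, so $c_H$ may be replaced by $L(N)$ at the cost of a $(1+o(1))$ factor, which yields \eqref{infvariance}. The case $t=0$ is trivial (both sides vanish, with the convention $L(0)=0$), and if uniformity in $t\in[0,1]$ is wanted for the subsequent limit theorem it follows since $m\mapsto\Var(S_\infty(m))$ is nondecreasing and regularly varying, by the uniform convergence theorem for regularly varying functions.

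The main obstacle is the middle step: because the covariance is not summable, the sum cannot be controlled by the usual tail bounds, and one must check that the slowly varying factor in $R_{X_\infty}$ does not disturb the leading term. Here this is light work, because the proof of Lemma~\ref{Lemma1} already exhibits $L$ as converging to a constant, but it is precisely the point at which long-range dependence enters and the argument departs from the finite-superposition (central limit) case.
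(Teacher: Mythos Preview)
Your argument is correct and follows the same opening move as the paper: both write
\[
\Var\bigl(S_\infty([Nt])\bigr)=[Nt]\,R_{X_\infty}(0)+2\sum_{j=1}^{[Nt]-1}\bigl([Nt]-j\bigr)\,\frac{L(j)}{j^{2(1-H)}},
\]
observe the diagonal term is $O([Nt])=o([Nt]^{2H})$, and evaluate the off-diagonal sum. The difference lies in how that sum is handled. The paper interprets it as a Riemann sum for $[Nt]^{2H}\int_0^1(1-u)u^{2H-2}L([Nt]u)\,du$ and then invokes Karamata's theorem to extract $L(N)/(2H(2H-1))$; this treats $L$ as a generic slowly varying function and would work even if $L$ did not converge. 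You instead exploit the sharper fact, visible in the bounds of Lemma~\ref{Lemma1}, that $L(h)\to c_H=C_2\Gamma(2(1-H))$, which lets you replace $L(h)$ by $c_H$ up to an arbitrarily small uniform error on $h\ge H_0$ and reduce to the elementary asymptotics $\sum_{h<m}h^{-\alpha}\sim m^{1-\alpha}/(1-\alpha)$ and $\sum_{h<m}h^{1-\alpha}\sim m^{2-\alpha}/(2-\alpha)$. Your route avoids the Riemann-sum passage and Karamata's theorem at the cost of using a property of $L$ that is special to this construction; the paper's route is a bit heavier but is the template reused verbatim for the higher-order cumulants in Theorem~\ref{theorem:cumulants}, where keeping $L$ inside the integral is convenient.
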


\begin{proof}
Using the expression for the covariance function of the infinite superposition from Lemma \ref{Lemma1}, write
\begin{equation*}
\begin{split}
\Var\left( S_{\infty }([Nt])\right) &=\sum _{m,\,n=1}^{[Nt]}\Cov \left(X_\infty (m), X_\infty (n)\right)  \\
&=[Nt]\Var \left(X_\infty (n)\right) +2\sum _{m,\,n=1,\,m>n}^{[Nt]} \frac{L(m-n)}{(m-n)^{2(1-H)}}\\
&=C_2[Nt]\zeta (1+2(1-H))+2\sum _{j=1}^{[Nt]-1}\left([Nt]-j\right)\frac {L(j)}{j^{2(1-H)}},
\end{split}
\end{equation*}
where $\zeta (\cdot )$ is Riemann's zeta function. The sum appearing in the expression for the variance
\begin{equation*}
\sum _{j=1}^{[Nt]-1}\left([Nt]-j\right)\frac {L(j)}{j^{2(1-H)}}
\end{equation*}
 is a Riemann sum for the following integral:
\begin{equation*}
\int _0^1 ([Nt]-[Nt]u)\frac{L([Nt]u)}{([Nt]u)^{2(1-H)}}[Nt]du=[Nt]^{2H}\int _0^1 (1-u)u^{2H-2}L([Nt]u)du.
\end{equation*}
Consider the integral
\begin{equation*}
\int _0^1 u^{2H-2}L([Nt]u)du=\frac {1}{[Nt]^{2H-1}}\int _0^{[Nt]}v^{2H-2}L(v)dv,
\end{equation*}
and apply Karamata's theorem \cite[Theorem 2.1]{Resnick} to get
\begin{equation*}
\int _0^{[Nt]}v^{2H-2}L(v)dv=\frac {L(N)[Nt]^{2H-1}}{2H-1}(1+o(1))
\end{equation*}
as $N\to \infty $. Similarly,
\begin{equation*}
\int _0^1u^{2H-1}L([Nt]u)du=\frac {L(N)}{2H}(1+o(1))
\end{equation*}
as $N\to \infty $, and therefore
\begin{equation*}
\int _0^1 ([Nt]-[Nt]u)\frac{L([Nt]u)}{([Nt]u)^{2(1-H)}}[Nt]du=\frac {L(N)[Nt]^{2H}}{2H(2H-1)}(1+o(1)).
\end{equation*}
For $\frac{1}{2}<H<1$, the second term in the expression for the variance of $S_\infty ([Nt])$ dominates the first, and  \eqref{infvariance} follows.

\end{proof}

In order to characterize the limit distribution of the partial sums of the infinite superpositions, we use the representation of the discretized stationary OU process as a first order autoregressive sequence
\begin{equation}\label{ar}
X^{(k)}(i)=e^{-\lambda _k}X^{(k)}(i-1)+W^{(k)}(i),
\end{equation}
where $W^{(k)}(i)$ is independent of $X^{(k)}(j)$ for all $j<i$. Denote by $\rho _k=e^{-\lambda _k}$. The following lemma provides a useful representation of the partial sum process for the infinite superposition.

\begin{lemma}\label{Lemma:representation}
The centered partial sum of the superposition of processes that satisfy  condition (A) with $p=2$ and condition (B) with $\lambda ^{(k)}=\lambda /k$ and
$\delta _{k}=k^{-(1+2(1-H))}$, $\frac{1}{2}<H<1$,  can be written as
\begin{equation}\label {representation}
S_\infty ([Nt])-\E S_\infty ([Nt]=\sum _{k=1}^\infty b^{(k)}_{[Nt]}\tau ^{(k)} (0)+ \sum _{j=1}^{[Nt]}\sum _{k=1}^\infty a^{(k)}_{[Nt]-j}V^{(k)}(j),
\end{equation}
where $\tau ^{(k)}(0)$,  $V^{(k)}(j)$ are independent for different $k$, for each $k$ $V^{(k)}(j)$ are independent for different $j$ and also independent of $\tau ^{(k)}(0)$. The series in \eqref{representation} converge almost surely, and the  coefficients are given by
\begin{equation}\label{bcoefficient}
b^{(k)}_{[Nt]}=\sum _{i=1}^{[Nt]}\rho_k ^i=\frac{\rho _k (1-\rho _k^{[Nt]})}{1-\rho _k},
\end{equation}
and
\begin{equation}\label{acoefficient}
a^{(k)}_{[Nt]-j}=\sum _{i=0}^{[Nt]-j}\rho _k^i=\frac {1-\rho _k^{[Nt]-j+1}}{1-\rho _k}.
\end{equation}
\end{lemma}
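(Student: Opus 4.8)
The plan is to unroll the autoregressive recursion \eqref{ar} for each component process, sum it over the time index, and then assemble the components, using condition (B) to control the resulting infinite series. First I would iterate \eqref{ar} backward from time $0$: for each $k$ and each $i\ge 1$,
\[
X^{(k)}(i)=\rho_k^{\,i}X^{(k)}(0)+\sum_{j=1}^{i}\rho_k^{\,i-j}W^{(k)}(j),
\]
where, by \eqref{OUsolution}, $W^{(k)}(j)=\int_{j-1}^{j}e^{-\lambda_k(j-s)}\,dZ^{(k)}(\lambda_k s)$. Since $Z^{(k)}$ has independent increments and these integrals are over disjoint intervals, the variables $W^{(k)}(j)$, $j\ge 1$, are mutually independent and independent of $X^{(k)}(0)$; independence of the $Z^{(k)}$ across $k$ then yields the full independence structure claimed in the statement. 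Summing the display over $i=1,\dots,[Nt]$ and swapping the (finite) order of the double sum — for fixed $j$ the index $i$ runs from $j$ to $[Nt]$ — gives, for each fixed $k$,
\[
\sum_{i=1}^{[Nt]}X^{(k)}(i)=b^{(k)}_{[Nt]}X^{(k)}(0)+\sum_{j=1}^{[Nt]}a^{(k)}_{[Nt]-j}W^{(k)}(j),
\]
with $b^{(k)}_{[Nt]}=\sum_{i=1}^{[Nt]}\rho_k^{\,i}$ and $a^{(k)}_{[Nt]-j}=\sum_{m=0}^{[Nt]-j}\rho_k^{\,m}$; the closed forms \eqref{bcoefficient} and \eqref{acoefficient} are just the geometric series.

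Next I would set $\tau^{(k)}(0)=X^{(k)}(0)-\E X^{(k)}(0)$ and $V^{(k)}(j)=W^{(k)}(j)-\E W^{(k)}(j)$, subtract expectations in the previous display, and sum over $k$. Because $X_\infty(i)=\sum_k X^{(k)}(i)$ and the sum over $i$ is finite, $S_\infty([Nt])=\sum_{k=1}^\infty\sum_{i=1}^{[Nt]}X^{(k)}(i)$, so summing the centered per-component identity over $k$ produces exactly \eqref{representation} — provided the two series there converge and the order of the $j$- and $k$-summations in the second term may be exchanged.

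The main obstacle is precisely this convergence/interchange step, and it is where condition (B) enters. For fixed $n=[Nt]$ one has the crude bounds $0\le b^{(k)}_n\le n$ and $0\le a^{(k)}_{n-j}\le n$ (as $0<\rho_k=e^{-\lambda_k}<1$), while stationarity of \eqref{ar} gives $\Var W^{(k)}(j)=(1-\rho_k^2)\Var X^{(k)}\le \Var X^{(k)}$. Using independence of the centered variables across $k$ and $j$, this yields
\[
\sum_{k=1}^{\infty}\Var\!\bigl(b^{(k)}_n\tau^{(k)}(0)\bigr)\le n^2\sum_{k=1}^{\infty}\Var X^{(k)}<\infty,\qquad
\sum_{j=1}^{n}\sum_{k=1}^{\infty}\Var\!\bigl(a^{(k)}_{n-j}V^{(k)}(j)\bigr)\le n^3\sum_{k=1}^{\infty}\Var X^{(k)}<\infty,
\]
so both series converge in $L^2$, and, being sums of independent centered square-integrable variables, converge almost surely by Kolmogorov's two-series theorem; finiteness of the double sum of variances also licenses the interchange of the $j$- and $k$-summations (equivalently, Fubini applied to the $L^2$-convergent double series). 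Finally, $\sum_k\E X^{(k)}(0)<\infty$ from (B) makes the centering well-defined, and assembling these facts completes the proof.
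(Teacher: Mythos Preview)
Your derivation of the representation is correct and mirrors the paper's: iterate the autoregression \eqref{ar}, sum over $i$, center, and then sum over $k$, with the independence structure read off from the L\'evy construction. The only real difference lies in the almost-sure convergence step. The paper establishes convergence by computing the variances of the two series rather precisely, expressing each as a multiple sum involving $L(\cdot)/(\cdot)^{2(1-H)}$, passing to a Riemann integral, and invoking Karamata's theorem (via Remark~\ref{bounded}) to show that both variances are of order $L(N)[Nt]^{2H}$. Your route is more elementary: the crude bounds $b^{(k)}_n,\,a^{(k)}_{n-j}\le n$ together with $\Var V^{(k)}(j)\le \Var X^{(k)}$ reduce everything to $\sum_k\Var X^{(k)}<\infty$ from condition (B), and Kolmogorov's $L^2$ criterion then gives almost-sure convergence. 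This is shorter and fully sufficient for the lemma as stated; what it forgoes is the sharp growth rate of each piece, which the paper's computation yields as a by-product and which feeds into the cumulant asymptotics of Theorem~\ref{theorem:cumulants}.
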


\begin{proof}
 Center the variables
\begin{equation*}
\tau^{(k)}(i)=
X^{(k)}(i)-\E X^{(k)}(i), \quad
V^{(k)}(i)= W^{(k)}(i)- \E W^{(k)}(i)
\end{equation*}
to arrive at centered version of \eqref{ar}
\begin{equation}\label{arcentered}
\tau^{(k)}(i)=\rho_k \tau^{(k)}(i-1)+V^{(k)}(i).
\end{equation}
Iterate \eqref{arcentered} to obtain
\begin{equation*}
\tau^{(k)}(i)=\rho _k^i \tau ^{(k)} (0)+\sum _{j=1}^{i}\rho_k ^{i-j}V^{(k)}(j).
\end{equation*}
Now the partial sum of $\tau ^{(k)}$ can be written
\begin{equation*}
\begin{split}
\sum _{i=1}^{[Nt]}\tau^{(k)}(i)&=\tau ^{(k)} (0)\sum _{i=1}^{[Nt]}\rho_k ^i+\sum _{i=1}^{[Nt]}\sum _{j=1}^{i}\rho_k ^{i-j}V^{(k)}(j)\\
&=\tau ^{(k)} (0)\sum _{i=1}^{[Nt]}\rho_k ^i+\sum _{j=1}^{[Nt]}V^{(k)}(j)\sum _{i=j}^{[Nt]}\rho_k ^{i-j}\\
&=\tau ^{(k)} (0)\sum _{i=1}^{[Nt]}\rho_k ^i+\sum _{j=1}^{[Nt]}V^{(k)}(j)\sum _{m=0}^{[Nt]-j}\rho_k ^{m}\\
&=b^{(k)}_{[Nt]}\tau ^{(k)} (0)+ \sum _{j=1}^{[Nt]}a^{(k)}_{[Nt]-j}V^{(k)}(j),
\end{split}
\end{equation*}
where the coefficients are given by \eqref{bcoefficient} and \eqref{acoefficient}.
Note that for different $j$, $V^{(k)}(j)$ are independent due to \eqref{ar}, and they are also independent of $\tau ^{(k)}(0)$. For different $k$, independence follows from the independence of OU type processes $X^{(k)}$. Summing with respect to $k$ completes the derivation of \eqref{representation}, provided that the series in \eqref{representation} converge almost surely. Series convergence holds because the terms have zero mean, and the series of second moments converge. The latter is shown as follows. Series of the second moments for the first term series in \eqref{representation} is
\begin{equation*}
\begin{split}
&\sum _{k=1}^\infty (b^{(k)}_{[Nt]})^2 \E (\tau ^{(k)} (0))^2=C_2 \sum _{k=1}^\infty (b^{(k)}_{[Nt]})^2 \delta _k\\
&= C_2 \sum _{k=1}^\infty \sum _{j,i=1}^{[Nt]}\rho _k ^{i+j}\delta _k= \frac{1}{\lambda ^{2(1-H)}}\sum _{j,i=1}^{[Nt]}\frac {L(i+j)}{(i+j)^{2(1-H)}}.
\end{split}
\end{equation*}
The sum can be viewed as a Riemann sum for the double integral:
\begin{equation*}
\begin{split}
& \frac{1}{[Nt]^2}\sum _{j,i=1}^{[Nt]}\frac {L(i+j)}{(i+j)^{2(1-H)}}=\int _0^1\int_0^1
\frac{L([Nt](x+y))}{([Nt](x+y))^{2(1-H)}}dxdy (1+o(1)) \\
&=\frac {L(N)}{[Nt]^{2(1-H)}}\int _0^1\int_0^1 \frac {dx\, dy}{(x+y)^{2(1-H)}} (1+(o(1))
\end{split}
\end{equation*}
as $N\to \infty $. The last equality is justified using Karamata's theorem as in Lemma \ref{Lemma42}, or by considering
\begin{equation*}
L(N)\int _{x=\epsilon}^1 \int _{y=0}^{1}\frac{L([Nt](x+y))}{L(N)}\frac{dx\,dy}{(x+y)^{2(1-H)}}
\end{equation*}
 and using Remark \ref{bounded} and the dominated convergence theorem.
Therefore the variance of the first series in \eqref{representation} is of the order $L(N)N^{2H}$.

For the second term in \eqref{representation}, the series of second moments is
\begin{equation*}
\begin{split}
 \sum _{j=1}^{[Nt]} \sum _{k=1}^\infty \left(a^{(k)}_{[Nt]-j}\right)^2 \E \left( V^{(k)}(j)\right)^2= \sum _{j=1}^{[Nt]} \sum _{k=1}^\infty \left (\sum _{i=0}^{[Nt]-j}\rho _k^i\right)^2 (1-\rho _k^2)C_2\delta _k,
\end{split}
\end{equation*}
since $\E (V^{(k)}(j))^2=(1-\rho _k^2)\E (\tau ^{(k)})^2$. The series of second moments becomes
\begin{equation*}
\begin{split}
&\sum _{j=1}^{[Nt]} \sum _{k=1}^\infty \sum _{i_1,i_2=0}^{[Nt]-j}\rho _k^{i_1+i_2}(1-\rho _k^2)C_2\delta _k\\
&= \frac {1}{\lambda ^{2(1-H)}}\sum _{j=1}^{[Nt]}\sum _{i_1,i_2=0}^{[Nt]-j} \left( \frac {L(i_1+i_2)}{(i_1+i_2)^{2(1-H)}}- \frac {L(i_1+i_2+2)}{(i_1+i_2+2)^{2(1-H)}}\right) = \frac {[Nt]^3}{\lambda ^{2(1-H)}} \times \\
& \int _{x=0}^1 \int _{y=0}^{1-x }\int _{z=0}^{1-x}
\left(\frac {L([Nt](y+z))}{([Nt](y+z))^{2(1-H)}}-\frac {L([Nt](y+z)+2)}{([Nt](y+z)+2)^{2(1-H)}}\right) dxdydz\\
& \times (1+o(1)).
\end{split}
\end{equation*}
Arguing in the same way as for the first term in  \eqref{representation}, we have
\begin{equation*}
\begin{split}
&\sum _{j=1}^{[Nt]} \sum _{k=1}^\infty \sum _{i_1,i_2=0}^{[Nt]-j}\rho _k^{i_1+i_2}(1-\rho _k^2)C_2\delta _k
= \frac {[Nt]^{2H+1} L(N)}{\lambda ^{2(1-H)}} \\
& \times  \int _{x=0}^1 \int _{y=0}^{1-x }\int _{z=0}^{1-x} \left (\frac {1}{(y+z)^{2(1-H)}}-\frac {1}{((y+z)+2/[Nt])^{2(1-H)}}\right) dxdydz\\
&= \frac {[Nt]^{2H} L(N)}{(2H-1)\lambda ^{2(1-H)}} \times  \int _{x=0}^1 \int _{y=0}^{1-x } [Nt]\Big( (y+2/[Nt])^{2H-1} - y^{2H-1} \\
& \qquad \qquad  - \left( (y+1-x+2/[Nt])^{2H-1} - (y+1-x)^{2H-1}\right) \Big) dxdy (1+o(1)).
\end{split}
\end{equation*}
It is not hard to see that the function under the  integral is increasing in $[Nt]$ and as $[Nt]\to \infty$ the limit is
\begin{equation*}
    2(2H-1) \left( y^{2H-2} - (y+1-x)^{2H-2}\right).
\end{equation*}
The monotone convergence theorem yields
\begin{equation*}
\begin{split}
&\sum _{j=1}^{[Nt]} \sum _{k=1}^\infty \sum _{i_1,i_2=0}^{[Nt]-j}\rho _k^{i_1+i_2}(1-\rho _k^2)C_2\delta _k\\
&= \frac {2 [Nt]^{2H} L(N)}{\lambda ^{2(1-H)}}  \int _{x=0}^1 \int _{y=0}^{1-x } \left( y^{2H-2} - (y+1-x)^{2H-2}\right) dxdy (1+o(1)),
\end{split}
\end{equation*}
which shows that the series in the second term converges almost surely, and that the variance of the second term has the same order as the variance of the first term, namely $L(N)[Nt]^{2H}$.
\end{proof}

The next theorem gives the asymptotic behavior of the cumulants of the partial sum process.

\begin{theorem}\label{theorem:cumulants}
The  m-th cumulant of the centered partial sum of the superposition of processes  that satisfy  condition (A) for all $p \ge 2$,  condition (B), and has $\lambda ^{(k)}=\lambda /k$ and
$\delta _{k}=k^{-(1+2(1-H))}$, $\frac{1}{2}<H<1$, has the following asymptotic behavior
\begin{equation*}
\kappa_{m, N} =D_mL(N)[Nt]^{m-2(1-H)}(1+o(1))
\end{equation*}
as $N\to \infty $, where the $D_m= C_m K$ for some positive constant $K$.
\end{theorem}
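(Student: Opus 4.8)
The plan is to follow the strategy already used for the second moment in Lemma~\ref{Lemma42} and for the variance of the two pieces of the representation in Lemma~\ref{Lemma:representation}, but now applied to the $m$-th cumulant. The key structural fact is that cumulants of a sum of independent random variables add, so from the representation \eqref{representation} the $m$-th cumulant of $S_\infty([Nt])-\E S_\infty([Nt])$ splits as
\begin{equation*}
\kappa_{m,N}=\sum_{k=1}^\infty \bigl(b^{(k)}_{[Nt]}\bigr)^m \kappa_m\!\left(\tau^{(k)}(0)\right) + \sum_{j=1}^{[Nt]}\sum_{k=1}^\infty \bigl(a^{(k)}_{[Nt]-j}\bigr)^m \kappa_m\!\left(V^{(k)}(j)\right),
\end{equation*}
because the $\tau^{(k)}(0)$ and the $V^{(k)}(j)$ (over all $k$ and $j$) form a jointly independent family. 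Here I use homogeneity of cumulants, $\kappa_m(cX)=c^m\kappa_m(X)$. Condition~(A) for all $p\ge 2$ gives $\kappa_m(\tau^{(k)}(0))=\kappa_m(X^{(k)})=C_m\delta_k$ for a constant $C_m$ depending only on the marginal family, and for the innovations $\kappa_m(V^{(k)}(j))=(1-\rho_k^m)C_m\delta_k$, since $V^{(k)}(j)$ is the innovation of an AR(1) with coefficient $\rho_k$ (this is the $m$-th cumulant analogue of the second-moment identity $\E(V^{(k)})^2=(1-\rho_k^2)\E(\tau^{(k)})^2$ used in Lemma~\ref{Lemma:representation}; it follows from matching cumulant transforms in \eqref{arcentered}).

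Next I would substitute $\delta_k=k^{-(1+2(1-H))}$, $\rho_k=e^{-\lambda/k}$, and expand the powers of the geometric sums \eqref{bcoefficient}–\eqref{acoefficient}. For the first term, $\bigl(b^{(k)}_{[Nt]}\bigr)^m=\bigl(\sum_{i=1}^{[Nt]}\rho_k^i\bigr)^m=\sum_{i_1,\dots,i_m=1}^{[Nt]}\rho_k^{i_1+\cdots+i_m}$, and interchanging the sums gives $C_m\lambda^{-2(1-H)}\sum_{i_1,\dots,i_m=1}^{[Nt]} (i_1+\cdots+i_m)^{-2(1-H)}L(i_1+\cdots+i_m)$, exactly paralleling the double-sum computation in Lemma~\ref{Lemma:representation} but now an $m$-fold sum. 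This is a Riemann sum for $[Nt]^m\int_{[0,1]^m}\bigl([Nt](x_1+\cdots+x_m)\bigr)^{-2(1-H)}L\bigl([Nt](x_1+\cdots+x_m)\bigr)\,dx$, which by slow variation and Remark~\ref{bounded} (for the domination needed to pass the limit under the integral, using $L([Nt]u)/L(N)$ bounded uniformly) equals $L(N)[Nt]^{m-2(1-H)}\int_{[0,1]^m}(x_1+\cdots+x_m)^{-2(1-H)}dx\,(1+o(1))$; the integral is finite since $2(1-H)<1<m$. For the second term I would handle the innovations the same way: write $\bigl(a^{(k)}_{[Nt]-j}\bigr)^m=\sum_{i_1,\dots,i_m=0}^{[Nt]-j}\rho_k^{i_1+\cdots+i_m}$, so summing over $k$ with the factor $(1-\rho_k^m)$ produces a telescoping-type difference $\frac{L(\Sigma i)}{(\Sigma i)^{2(1-H)}}-\frac{L(\Sigma i+m)}{(\Sigma i+m)^{2(1-H)}}$, then a further sum over $j$ from $1$ to $[Nt]$; this is an $(m+1)$-fold Riemann sum which, after the same slow-variation/Karamata analysis and a monotone-convergence step identical in spirit to the one at the end of Lemma~\ref{Lemma:representation}, again produces the order $L(N)[Nt]^{m-2(1-H)}$ with an explicit constant. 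Collecting the two contributions gives $\kappa_{m,N}=D_m L(N)[Nt]^{m-2(1-H)}(1+o(1))$ with $D_m=C_m K$, $K>0$ a combinatorial/geometric constant independent of the marginal family (so positive, since the limiting integrands are positive).

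The main obstacle I anticipate is twofold. First, one must be careful that the $m$-fold (and $(m+1)$-fold) interchange of summation, the Riemann-sum approximation, and the passage of the limit inside the integral are all legitimate for general $m$: this requires a uniform domination argument, for which Remark~\ref{bounded} (boundedness of $L([Nt]u)/L(N)$ uniformly in $N$ and $u$) together with integrability of $(x_1+\cdots+x_m)^{-2(1-H)}$ on $[0,1]^m$ and dominated/monotone convergence is the right tool, exactly as the paper already does for $m=2$. Second, one must verify that $D_m>0$, i.e. that the two positive contributions do not cancel — but since cumulants of independent summands add and each summand's contribution comes with a positive limiting integral times the fixed sign of $C_m$, no cancellation occurs and $D_m=C_m K$ with $K>0$. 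A minor point is justifying $\kappa_m(V^{(k)})=(1-\rho_k^m)C_m\delta_k$; this follows from the AR(1) cumulant identity and the proportionality-to-$\delta_k$ hypothesis in (A), and it is the step that most directly uses condition (A) for \emph{all} $p\ge 2$ rather than just $p=2$.
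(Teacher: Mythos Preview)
Your proposal is correct and follows essentially the same route as the paper's proof: the paper also splits $\kappa_{m,N}$ into the two pieces coming from $\tau^{(k)}(0)$ and $V^{(k)}(j)$ via the representation \eqref{representation}, obtains $\kappa_m(V^{(k)})=(1-\rho_k^m)C_m\delta_k$ from the AR(1) cumulant identity, expands the $m$-th powers of the geometric sums into $m$-fold index sums, converts these to Riemann integrals, and passes to the limit using Remark~\ref{bounded} and dominated/monotone convergence. The only cosmetic difference is that the paper writes out the log-characteristic-function expansion explicitly rather than invoking cumulant additivity and homogeneity abstractly; one minor slip in your write-up is the phrase ``verify that $D_m>0$'' --- as you yourself note a line later, what is actually needed (and what the theorem asserts) is $D_m=C_m K$ with $K>0$, the sign of $D_m$ being that of $C_m$.
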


\begin{proof}
Using \eqref{representation}, the logarithm of the characteristic function of the partial sum process can be written as
\begin{equation*}
\begin{split}
&\log \E \exp \left\{ iu (S_\infty ([Nt])-\E S_\infty ([Nt])\right\}\\
&=\sum _{k=1}^\infty \log \E \exp \left\{i b^{(k)}_{[Nt]} u \tau ^{(k)} (0 )\right\}+ \sum _{j=1}^{[Nt]}\sum _{k=1}^\infty
\log \E \exp \left\{ ia^{(k)}_{[Nt]-j} u V^{(k)}(j)\right\}.
\end{split}
\end{equation*}
Under assumption (A), the logarithm of the characteristic function of $\tau ^{(k)} (0)$ can be expanded
\begin{equation*}
\log \E \exp \left\{ iu \tau ^{(k)} (0 )\right\}=\sum_{m=2}^\infty \frac{ {(iu)}^m}{m!} C_m\delta _k,
\end{equation*}
where the summation is from $m=2$ due to centering.
From \eqref{arcentered}, the logarithm of the characteristic function of $ V^{(k)}(j)$ can also be expanded as follows:
\begin{equation*}
\begin{split}
\log \E \exp \left \{iu V^{(k)}(j)\right\}&=\ E \exp \left \{iu \tau^{(k)}(i)\right\} - \E \exp \left \{iu\rho _k \tau ^{(k)}(i-1)\right\} \\
&=\sum_{m=2}^\infty \frac{ {(iu)}^m}{m!} C_m\delta _k-\sum_{m=2}^\infty \frac{ {(iu\rho _k)}^m}{m!} C_m\delta _k\\
&=\sum_{m=2}^\infty \frac{ {(iu)}^m}{m!} C_m(1-\rho _k ^m) \delta _k.
\end{split}
\end{equation*}
Therefore the $m$-th cumulant of the centered partial sum process is
\begin{equation*}
\kappa _{m, N}=C_m \sum_{k=1}^\infty  \left(b^{(k)}_{[Nt]}\right)^m \delta _k +C_m\sum _{j=1}^{[Nt]} \sum_{k=1}^\infty  \left (a^{(k)}_{[Nt]-j}\right)^m (1-\rho _k ^m) \delta _k=I+II.
\end{equation*}
Consider the first term:
\begin{equation*}
\begin{split}
I&=C_m\sum_{k=1}^\infty  \left(b^{(k)}_{[Nt]}\right)^m \delta _k=C_m\sum_{k=1}^\infty \delta _k
\left( \sum _{i=1}^{[Nt]} \rho _k ^i \right)^m\\
&=C_m\sum_{i_1,\dots , i_m=1}^{[Nt]} \sum _{k=1}^\infty \delta _k \rho _k^{i_1+\cdots +i_m}=\frac {C_m}{C_2  \lambda ^{2(1-H)}}\sum_{i_1,\dots , i_m=1}^{[Nt]}\frac {L(i_1+\cdots +i_m)}{(i_1+\cdots +i_m)^{2(1-H)}}\\
&=\frac {C_m[Nt]^m}{C_2  \lambda ^{2(1-H)}}\int _0^1 \dots \int _0^1 \frac {L([Nt](x_1+\cdots +x_m))}
{([Nt](x_1+\cdots +x_m))^{2(1-H)}}dx_1\dots dx_m\left(1+o(1)\right) \\
&= \frac {C_m[Nt]^mL(N)}{C_2  \lambda ^{2(1-H)}[Nt]^{(2(1-H)}}\int _0^1 \dots \int _0^1 \frac {dx_1\dots dx_m}{(x_1+\cdots +x_m)^{2(1-H)}}\left(1+o(1)\right),
\end{split}
\end{equation*}
where we used Remark \ref{bounded} and the dominated convergence argument for the slowly varying function. This shows that the first part of the expression for the $m$-th cumulant behaves like $L(N)[Nt]^{m-2(1-H)}$ multiplied by a constant
\begin{equation*}
D_{m, I}=\frac {C_m}{C_2 \lambda ^{2(1-H)}} \int _0^1 \dots \int _0^1 \frac {dx_1\dots dx_m}{(x_1+\cdots +x_m)^{2(1-H)}}.
\end{equation*}
Now consider the second term
\begin{equation*}
\begin{split}
II&=C_m\sum _{j=1}^{[Nt]}\sum_{k=1}^\infty  \left (a^{(k)}_{[Nt]-j}\right)^m (1-\rho _k ^m) \delta _k \\
&=
C_m\sum _{j=1}^{[Nt]}\sum_{k=1}^\infty  \left( \sum _{i=0}^{[Nt]-j} \rho_k^i \right)^m (1-\rho _k ^m) \delta _k\\
&= C_m\sum _{j=1}^{[Nt]}\sum_{k=1}^\infty \sum _{i_1,\dots , i_m=0}^{[Nt]-j} \rho _k ^{i_1+\cdots +i_m} (1-\rho _k ^m) \delta _k
=\frac {C_m}{C_2 \lambda ^{2(1-H)}}\\
& \times \sum _{j=1}^{[Nt]}\sum _{i_1,\dots , i_m=0}^{[Nt]-j}  \left (  \frac {L(i_1+\cdots +i_m)}{(i_1+\cdots i_m)^{2(1-H)} }-     \frac {L(i_1+\cdots +i_m+m)}{(i_1+\cdots i_m+m)^{2(1-H)} } \right) =\frac {C_m[Nt]^{m+1}}{C_2 \lambda ^{2(1-H)}} \\
& \times \int_{x=0}^1 \int _{y_1=0}^{1-x}\dots \int_{y_m=0}^{1-x}  \left (  \frac {L([Nt](y_1+\cdots +y_m))}{([Nt](y_1+\cdots y_m))^{2(1-H)} }- \frac{L([Nt](y_1+\cdots +y_m)+m)}{([Nt](y_1+\cdots y_m)+m)^{2(1-H)} }\right)\\
& \times  dy_1\dots dy_{m}dx\left (1+o(1)\right)
\end{split}
\end{equation*}
Remark \ref{bounded} and the dominated convergence argument yield
\begin{equation*}
\begin{split}
II&=\frac {C_m L(N) [Nt]^{m-2(1-H)+1}}{C_2 \lambda ^{2(1-H)}} \\
& \times \int_{x=0}^1 \int _{y_1=0}^{1-x}\dots \int_{y_m=0}^{1-x} \left (  \frac {1}{(y_1+\cdots y_m)^{2(1-H)} }- \frac{1}{((y_1+\cdots y_m)+m/[Nt])^{2(1-H)} }\right)\\
& \times  dy_1\dots dy_{m} dx\left (1+o(1)\right) =\frac {m C_m L(N) [Nt]^{m-2(1-H)}}{C_2 \lambda ^{2(1-H)}} \\
& \times \int_{x=0}^1 \int _{y_1=0}^{1-x}\dots \int_{y_{m-1}=0}^{1-x} \left ( (y_1+\cdots y_{m-1})^{2H-2} - (y_1+\cdots y_{m-1}+1-x)^{2H-2}\right)\\
& \times  dy_1\dots dy_{m-1} dx\left (1+o(1)\right) = D_{m, II} L(N)[Nt]^{m-2(1-H)} \left (1+o(1)\right)
\end{split}
\end{equation*}
with
\begin{equation*}
\begin{split}
D_{m, II} &= \frac {m C_m}{C_2 \lambda ^{2(1-H)}} \int_{x=0}^1 \int _{y_1=0}^{1-x}\dots \int_{y_{m-1}=0}^{1-x} \\
& \qquad \left ( (y_1+\cdots y_{m-1})^{2H-2} - (y_1+\cdots y_{m-1}+1-x)^{2H-2}\right)  dy_1\dots dy_{m-1} dx.
\end{split}
\end{equation*}
Thus the asymptotic behavior of the second term is the same as of the first term, namely $L(N)[Nt]^{m-2(1-H)}$.
\end{proof}

\begin{corollary}
Under the assumptions of Theorem \ref{theorem:cumulants}, the centered partial sum process $\{S_\infty (t)-\E S_\infty (t),\, t \geq 0\}$ is intermittent.
\end{corollary}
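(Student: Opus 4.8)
The plan is to read off the scaling function $\tau$ of the process $Y(t):=S_\infty(t)-\E S_\infty(t)$ directly from the cumulant asymptotics of Theorem \ref{theorem:cumulants} and then to exhibit two exponents at which $q\mapsto\tau(q)/q$ strictly increases. Since condition (A) is assumed for every $p\ge 2$, $Y(t)$ has moments of all orders, so $\overline q=\infty$; and since the partial sum process is constant on each interval $[n,n+1)$, the limit defining $\tau$ in \eqref{deftau} may be evaluated along integers $N\to\infty$. As a first step I would record the value $\tau(2)=2H$: taking $t=1$ in Lemma \ref{Lemma42} gives $\E Y(N)^2=\Var S_\infty(N)=\frac{L(N)N^{2H}}{H(2H-1)}(1+o(1))$, and since $L$ is slowly varying, $\log L(N)/\log N\to 0$, whence $\tau(2)=2H$ and $\tau(2)/2=H$.

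Next I would fix an even integer $m\ge 4$ and express the $m$-th moment through the moment--cumulant relation $\E Y(N)^m=\sum_{\pi}\prod_{B\in\pi}\kappa_{|B|,N}$, the sum running over set partitions $\pi$ of $\{1,\dots,m\}$; because $\kappa_{1,N}=0$, only partitions all of whose blocks have size at least $2$ survive. By Theorem \ref{theorem:cumulants}, such a partition $\pi$ contributes a term of order $L(N)^{|\pi|}N^{m-2(1-H)|\pi|}$, so the power of $N$ is largest --- and, among admissible partitions, attained only --- for the single-block partition, whose term is $\kappa_{m,N}\sim C_mK\,L(N)N^{m-2(1-H)}$. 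Hence, provided $C_m\ne 0$ --- which holds for the non-Gaussian self-decomposable marginals considered here, in particular for all the examples in the Appendix --- this leading term cannot be cancelled by the sub-leading ones, so $\E Y(N)^m\sim C_mK\,L(N)N^{m-2(1-H)}$ and therefore $\tau(m)=m-2(1-H)$.

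It then remains only to compare the two ratios. Since $\tau(m)/m=1-\frac{2(1-H)}{m}$ is strictly increasing in $m$, for every even $m\ge 4$ one has
\[
\frac{\tau(2)}{2}=H<1-\frac{2(1-H)}{m}=\frac{\tau(m)}{m},
\]
so taking $p=2$ and $r=m$ in the definition of intermittency shows that $\{Y(t),\,t\ge 0\}$ is intermittent. (Interpolating between even integers via H\"older's inequality in fact gives $\tau(q)=q-2(1-H)$ for all $q\ge 2$, so $q\mapsto\tau(q)/q$ is strictly increasing throughout $[2,\infty)$.)

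The one point that needs genuine care is the passage from cumulants to moments. One must verify, first, that the sub-leading partitions really cannot cancel the single-block contribution --- which is automatic, since its exponent of $N$ strictly exceeds that of every partition with $|\pi|\ge 2$ --- and, second, that the leading cumulant is actually present, i.e.\ that $C_m\ne 0$ for some $m\ge 3$. If all higher cumulants vanish the marginal law is Gaussian, the partial sums are Gaussian, and the process is not intermittent; so the statement is implicitly about the non-degenerate case. If the only nonvanishing higher $C_m$ occurs at an odd index $m$, the same computation still yields the one-sided estimate $\E|Y(N)|^m\ge|\E Y(N)^m|\sim |C_m|K\,L(N)N^{m-2(1-H)}$, hence $\tau(m)/m\ge 1-\frac{2(1-H)}{m}>H=\tau(2)/2$, which suffices.
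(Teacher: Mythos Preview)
Your proposal is correct and follows essentially the same route as the paper: express even moments through cumulants, identify the single-block term $\kappa_{m,N}$ as dominant, read off $\tau(2)=2H$ and $\tau(m)=m-2(1-H)$, and compare. The paper simply does the minimal case $p=2$, $r=4$, writing $\E|Y(N)|^4=\kappa_{4,N}+3\kappa_{2,N}^2$ and observing $2H+2>4H$; you carry out the general partition expansion for arbitrary even $m$ and are more careful about the non-degeneracy caveat $C_m\ne 0$ (equivalently $D_m\ne 0$), which the paper tacitly assumes.
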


\begin{proof}
Let $Y([Nt])=S_{\infty}([Nt])-\E S_{\infty}([Nt])$. We show intermittency at $p=2$ and $r=4$. Using the relation between moments and cumulants it follows from Theorem \ref{theorem:cumulants} that
\begin{align*}
E|Y(N)|^2 &= \kappa_{2,N} + \kappa_{1,N}^2 = D_2 L(N) N^{2H}(1+o(1)),\\
E|Y(N)|^4 &= \kappa_{4,N} + 3 \kappa_{2,N}^2 = D_4 L(N) N^{2H+2}(1+o(1)) + 3 D_2^2 L(N)^2 N^{4H}(1+o(1)).
\end{align*}
Since $H<1$ implies $2H+2 > 4H$, we have
\begin{align*}
\tau(2) &= 2H,\\
\tau(4) &= 2H+2,
\end{align*}
and thus $\tau(2)/2 < \tau(4)/4$.
\end{proof}

Note that the behavior of moments shown in the proof implies that
\begin{equation*}
\E Y(N)^4 / (\E Y(N)^2)^2
\end{equation*}
grows to infinity as $N\to\infty$, the behavior noted by Frisch (\cite[Section 8.2]{frisch1995turbulence} as a manifestation of intermittency. Other examples of unusual growth of moments are given in \cite{sandev2015distributed} in the context of fractional diffusion. Also note that if the limit of the partial sum process for the infinite superposition existed in the sense of all finite dimensional distribution, then by the Lamperti's theorem  (see, for example, \cite[Theorem 2.1.1]{embrechts2002selfsimilar}), the norming had to be a regularly varying function of $N$. However, if $N^a$  norming is used, it can not produce all converging cumulants no matter what $a\in \rr$ is chosen. This is because the m-th cumulant of $N^aY([Nt])=N^a\left(S_{\infty}([Nt])-\E S_{\infty}([Nt])\right)$ behaves as $N^{m(a+1)-2(1-H)}$. Similar cumulant behavior was found in \cite{Terdik}, where it was noted that the existence of the limit was unlikely. Also, similar behavior of cumulants was obtained in \cite[Example 4.1]{BN_01} for a case of continuous (integrated) superpositions of OU type processes.  Of course, convergence of cumulants provides a sufficient means for proving the existence of the limit, and showing that there is no weak limit under intermittency in the usual partial sum setting remains an open problem.

Also note that for even $q$, the scaling function defined in  \eqref{deftau} in this case  is $\tau (q)=q-2(1-H)$, and
$$\frac{\tau (q)}{q}=1-\frac {2(1-H)}{q}$$ is strictly increasing in $q$. The term $-2(1-H)$ in the exponent of the asymptotic behavior of the cumulants
\begin{equation*}
\kappa_{q, N} =D_qL(N)[Nt]^{q-2(1-H)}(1+o(1))
\end{equation*}
gives the reason for both the increasing behavior of $\tau(q)/q$ and for the lack of norming that would make cumulants converge. The formal link between intermittency and lack of the limit theorems needs to be further developed for the partial sums and other sequences of stochastic processes.

\section{Appendix}\label{Examples}
The examples in this section have been discussed in \cite{BN_01,LTauf}. We briefly present them to illustrate that conditions (A) and (B) are satisfied for a number of OU type processes.

\begin{example}

The stationary Gamma OU type process $\{X(t), t\geq 0\}$ with gamma marginal distribution has the cumulant generating function
\begin{equation}\label{Ex1GammaCum}
\kappa (\zeta)=\log \E \exp \left\{ i\zeta X(t) \right\} = - \alpha \log \left(1-\frac{i\zeta}{\beta} \right) = \sum_{m=1}^{\infty} \frac{\alpha (i \zeta )^m}{m \beta^m},
\end{equation}
$\alpha >0$, $\beta >0$, $\zeta<\beta $.
If $\{X^{(k)}(t), t\geq 0\}$, $k\geq 1$ are independent stationary Gamma OU type processes with marginal diistributions $\Gamma (\alpha_k,\beta)$, $k\geq 1$ where
\begin{equation*}
\alpha _{k}=\alpha k^{-(1+2(1-H))}, \quad \frac{1}{2}<H<1,
\end{equation*}%
then condition (A) is satisfied with $\delta_k=\alpha_k$, and if $\sum_{k=1}^\infty \alpha _k<\infty$, condition (B) is satisfied as well. The supOU process
$X_{\infty }(t)=\sum_{k=1}^{\infty }X^{(k)}(t),\,t \geq 0$
has a marginal $\Gamma (\sum_{k=1}^{\infty }\alpha_{k},\beta )$ distribution.
\end{example}

\begin{example}
The stationary inverse Gaussian OU type process has the cumulant generating function
\begin{equation*}
\kappa (\zeta)=\log \E \exp \left\{ i\zeta X(t) \right\} = \delta \left( \gamma - \sqrt{\gamma^2-2i\zeta}\right) = \sum_{m=1}^{\infty} \frac{\delta (2m)! (i\zeta )^m}{(2m-1)(m!)^2 2^m \gamma^{2m-1}},
\end{equation*}%
$\gamma >0$, $\delta >0$. It follows that independent stationary OU type processes $\{X^{(k)}(t), t\geq 0\}$, $k\geq 1$ with marginals $IG (\delta _{k},\gamma)$, $k\geq 1$ where
\begin{equation*}
\delta_{k}=\delta k^{-(1+2(1-H))}, \quad \frac{1}{2}<H<1,
\end{equation*}%
satisfy conditions (A) and (B), and we obtain inverse Gaussian supOU process
\begin{equation*}
X_{\infty }(t)=\sum_{k=1}^{\infty }X^{(k)}(t), \quad t \geq 0,
\end{equation*}%
with marginal $IG (\sum_{k=1}^{\infty }\delta _{k},\gamma)$ distribution.
\end{example}

\begin{example}
The stationary Variance Gamma OU type process has the
the cumulant generating function
\begin{equation*}
\kappa (\zeta)=\log \E \exp \left\{i \zeta X(t) \right\} = i\mu \zeta + 2 \kappa \log \left( \frac{\gamma}{\alpha^2-(\beta+i\zeta)^2} \right),
\end{equation*}
$\kappa >0$, $\alpha >|\beta |>0$, $\mu  \in \rr$, $\gamma ^2=\alpha ^2-\beta ^2$, $\vert \beta + \zeta \vert < \alpha $.
It follows that $VG\left(\kappa,\alpha ,\beta ,\mu \right)$ distribution is closed under convolution with respect to parameters $\kappa$ and $\mu$. Independent stationary OU type processes $\{X^{(k)}(t), t\geq 0\}$, $k\geq 1$ with marginals $VG\left(\kappa_k,\alpha ,\beta ,\mu_k \right)$, $k\geq 1$ where $\sum _{k=1}^\infty \mu _k$ converges and
\begin{equation*}
\kappa_{k}=\kappa k^{-(1+2(1-H))}, \quad
\frac{1}{2}<H<1,
\end{equation*}%
satisfy conditions (A) and (B), and we obtain variance gamma supOU process
\begin{equation*}
X_{\infty }(t)=\sum_{k=1}^{\infty }X^{(k)}(t), \quad t \geq 0,
\end{equation*}%
with marginal $VG\left(\sum_{k=1}^{\infty }\kappa_{k},\alpha ,\beta ,\sum_{k=1}^{\infty }\mu_{k} \right)$ distribution.
\end{example}

\begin{example}
The stationary normal inverse Gaussian OU type process
has cumulant generating function
\begin{equation*}
\kappa (\zeta)=\log \E \exp \left\{ i\zeta X(t) \right\}  =i\mu \zeta +\delta \left( \sqrt{\alpha^{2}-\beta ^{2}}-\sqrt{\alpha ^{2}-\left( \beta +i\zeta \right) ^{2}}\right),
\end{equation*}%
$\alpha \geq \left\vert \beta \right\vert \geq 0,$  $\delta >0$, $\mu \in \mathbb{R}$, $\left\vert \beta +\zeta \right\vert <\alpha $. It follows that $NIG(\alpha ,\beta ,\delta ,\mu )$ distribution is closed under convolution with respect to parameters $\delta$ and $\mu$. Independent stationary OU type processes $\{X^{(k)}(t), t\geq 0\}$, $k\geq 1$ with marginals $NIG(\alpha ,\beta ,\delta_k ,\mu_k )$, $k\geq 1$ with convergent $\sum_{k=1}^\infty \mu _k$,
\begin{equation*}
\delta_{k}=\delta k^{-(1+2(1-H))}, \quad
\frac{1}{2}<H<1,
\end{equation*}%
satisfy conditions (A) and (B),  and we obtain normal inverse Gaussian supOU process
\begin{equation*}
X_{\infty }(t)=\sum_{k=1}^{\infty }X^{(k)}(t), \quad t \geq 0,
\end{equation*}%
with marginal $NIG(\alpha ,\beta ,\sum_{k=1}^{\infty }\delta_{k}, \sum_{k=1}^{\infty } \mu_k )$ distribution.
\end{example}

\begin{example}
The stationary positive tempered stable OU type process has the cumulant generating function
\begin{equation*}
\kappa (\zeta)=\log \E \exp \left\{ i\zeta X(t) \right\} =\delta \gamma -\delta \left( \gamma ^{\frac{1}{\kappa }}-2i\zeta \right) ^{\kappa },
\end{equation*}
$\kappa \in (0,1)$, $\delta >0$, $\gamma >0$, $0<\zeta <\frac{\gamma ^{1/\kappa }}{2}$.
Thus the $TS(\kappa,\delta ,\gamma )$ distribution is closed under convolution with respect to parameter $\delta$. Independent stationary OU type processes $\{X^{(k)}(t), t\geq 0\}$, $k\geq 1$ with marginals $TS(\kappa,\delta_k ,\gamma )$, $k\geq 1$ where
\begin{equation*}
\delta_{k}=\delta k^{-(1+2(1-H))}, \quad \frac{1}{2}<H<1,
\end{equation*}%
satisfy conditions (A) and (B), and we obtain tempered stable supOU process
\begin{equation*}
X_{\infty }(t)=\sum_{k=1}^{\infty }X^{(k)}(t), \quad t \geq 0,
\end{equation*}%
with marginal $TS(\kappa,\sum_{k=1}^{\infty }\delta_{k} ,\gamma )$ distribution.
\end{example}

More examples of supOU type processes satisfying Condition (A) can be derived from other distributions, for example, normal tempered stable, Euler's gamma distribution and $z$-distribution.

\bibliographystyle{abbrv}
\bibliography{References}

\end{document}